\documentclass[11pt, reqno]{amsart}

\usepackage{amsmath,amssymb,amsthm,mathtools}
\usepackage[
  left=1.10in,
  right=1.10in,
  top=1.00in,
  bottom=1.00in
]{geometry}
\usepackage{microtype}
\usepackage{placeins}
\usepackage{needspace}
\usepackage[hidelinks]{hyperref}
\usepackage{tikz}
\usetikzlibrary{arrows.meta,decorations.pathreplacing}

\tikzset{
  strand/.style   = {line width=.7pt,line cap=round,line join=round},
  over/.style     = {preaction={draw,white,line width=2.6pt}},
  bracelab/.style = {font=\scriptsize},
  panellab/.style = {font=\normalsize}
}

\newcommand{\Tbox}[2]{%
  \draw[strand,fill=white] ({#1-.55},-.45) rectangle ({#1+.55},.45);
  \node[font=\small] at ({#1},0) {$#2$};}
\newcommand{\Tarc}[4][6pt]{\draw[strand,rounded corners=#1]
  ({#2},.45) -- ({#2},{.45+#4}) -- ({#3},{.45+#4}) -- ({#3},.45);}
\newcommand{\Barc}[4][6pt]{\draw[strand,rounded corners=#1]
  ({#2},-.45) -- ({#2},{-.45-#4}) -- ({#3},{-.45-#4}) -- ({#3},-.45);}
\newcommand{\Dslot}[1]{%
  \draw[strand] ({#1-.45},.45)--({#1-.45},.16);
  \draw[strand] ({#1+.45},.45)--({#1+.45},.16);
  \draw[strand] ({#1-.45},-.45)--({#1-.45},-.16);
  \draw[strand] ({#1+.45},-.45)--({#1+.45},-.16);
  \node[font=\small] at ({#1},0) {$\cdots$};}
\newcommand{\Ubrace}[3]{\draw[line width=.6pt,decorate,
  decoration={brace,amplitude=5pt,mirror}] ({#1},{#3}) -- ({#2},{#3});}

\newtheorem{theorem}{Theorem}[section]
\newtheorem{corollary}[theorem]{Corollary}
\newtheorem{proposition}[theorem]{Proposition}
\newtheorem{lemma}[theorem]{Lemma}
\theoremstyle{remark}
\newtheorem{remark}[theorem]{Remark}

\newcommand{\Z}{\mathbb Z}
\newcommand{\Q}{\mathbb Q}

\title[Exceptional pretzel knots are not algebraically slice]
{Exceptional pretzel knots are not algebraically slice}
\author{Suman Saurabh}
\email{realsumansaurabh@gmail.com}
\subjclass[2020]{Primary 57K10; Secondary 57K14, 57N70}
\keywords{pretzel knots, slice knots, Alexander polynomial, Fox--Milnor factorization, knot concordance}
\hypersetup{
  pdftitle={Exceptional pretzel knots are not algebraically slice},
  pdfauthor={Suman Saurabh},
  pdfkeywords={pretzel knots, Alexander polynomial, Fox--Milnor factorization, knot concordance}
}

\begin{document}

\begin{abstract}
We prove that the Alexander polynomial of every knot in Lecuona's exceptional family of pretzel knots fails the Fox--Milnor condition.  Consequently, none of these knots is algebraically or topologically slice.  Together with work of Lecuona, Miller, and Kim--Lee--Song, this completes the slice--ribbon and topological-sliceness classifications for three-strand pretzel knots.  It also removes the nonexceptional hypothesis from the Lecuona--Wand classification of prime fibered ribbon pretzel knots up to reordering of parameters.
\end{abstract}

\maketitle

\section{Introduction}

For nonzero integers $p_1,\ldots,p_n$, let $P(p_1,\ldots,p_n)$ denote
the standard pretzel link with $p_i$ signed half-twists in the $i$th
twist region.  We use the sign convention of \cite{Lecuona2015} and
consider only parameter lists that give knots.

A knot is \emph{smoothly slice} if it bounds a smoothly embedded disk in the four-ball, and it is \emph{ribbon} if such a disk may be chosen with no interior local maxima for the radial function.  It is \emph{topologically slice} if it bounds a locally flat disk in the four-ball, and it is \emph{algebraically slice} if its Seifert form is metabolic, meaning that the form vanishes on a direct summand of half the rank.  These notions satisfy
\[
\text{ribbon}\Longrightarrow\text{smoothly slice}
\Longrightarrow\text{topologically slice}
\Longrightarrow\text{algebraically slice}.
\]
The slice--ribbon conjecture asserts that the first implication is an equivalence.

For Laurent polynomials $p,q\in\Z[t,t^{-1}]$, write $p\doteq q$ if
$p=\pm t^nq$ for some $n\in\Z$.  An algebraically slice knot $K$
satisfies the Fox--Milnor condition
\begin{equation}\label{eq:FMintro}
\Delta_K(t)\doteq f(t)f(t^{-1})
\end{equation}
for some $f\in\Z[t,t^{-1}]$; see Fox--Milnor \cite{FoxMilnor1966} and
Levine \cite{Levine1969}.  Thus failure of \eqref{eq:FMintro} rules out
both algebraic and topological sliceness.

Greene and Jabuka \cite{GreeneJabuka2011} proved the slice--ribbon
conjecture for three-strand pretzel knots with all parameters odd.
For knots with one even parameter, Lecuona \cite{Lecuona2015} reduced
the remaining cases to the knots
\begin{equation}\label{eq:coreintro}
P_a=P\!\left(a,-a-2,-\frac{(a+1)^2}{2}\right),
\qquad a\ge3\ \text{odd},
\end{equation}
and their mirrors.  Miller \cite{Miller2017} then obtained a
topological-sliceness classification with fewer exceptional cases.
Kim, Lee, and Song \cite[Theorems~3.3 and~4.2]{KLS2022} showed that
$\Delta_{P_a}(t)$ fails the Fox--Milnor condition when
$a\equiv3\pmod4$ or $a\equiv5\pmod8$.  Together with the exclusions
of Lecuona and Miller, these results left topological sliceness
unresolved only when
\[
a\equiv1,97\pmod{120}.
\]

We prove that $\Delta_{P_a}(t)$ fails the Fox--Milnor condition for
$a\equiv1\pmod4$.  We also show that the obstruction holds after
adding pairs $(q,-q)$ of odd parameters, in any order.  For positive
odd integers $q_1,\ldots,q_r$, let $\mathcal P(a;q_1,\ldots,q_r)$
be the collection of pretzel knots obtained by reordering the parameters
\begin{equation}\label{eq:pairedfamily}
a,\ -a-2,\ -\frac{(a+1)^2}{2},\ q_1,-q_1,\ldots,q_r,-q_r.
\end{equation}
We allow $r=0$, so that \eqref{eq:pairedfamily} gives $P_a$.

\begin{theorem}\label{thm:main}
Let $a\ge3$ be odd with $a\equiv1\pmod4$, and let $q_1,\ldots,q_r$ be positive odd integers.  For every $K\in\mathcal P(a;q_1,\ldots,q_r)$, the Alexander polynomial $\Delta_K(t)$ does not admit a Fox--Milnor factorization.  Consequently $K$ is not algebraically slice and, in particular, is not topologically slice.
\end{theorem}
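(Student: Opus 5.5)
The plan is to compute $\Delta_K(t)$ in closed form for every $K\in\mathcal P(a;q_1,\ldots,q_r)$. Then I will find a prime $p$ and a self-reciprocal irreducible $g\in\mathbb F_p[t]$ that divides $\Delta_K \bmod p$ to odd multiplicity. This obstructs \eqref{eq:FMintro}. Indeed, if $\Delta_K\doteq f(t)f(t^{-1})$, reduce modulo $p$ and let $f^*(t)=t^{\deg f}f(t^{-1})$. Each irreducible factor $h$ of $f$ contributes its reciprocal $h^*$ to $f^*$. So a self-reciprocal irreducible $g\doteq g^*$ occurs in $ff^*$ with multiplicity exactly twice its multiplicity in $f$, which is even. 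Note that $\Delta_K\bmod p$ stays nonzero for odd $p$ because $\Delta_K(1)=\pm1$.

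\emph{Step 1: reduce to a single ordering.} Reordering the parameters of a pretzel knot is a composition of Conway mutations, and the Alexander polynomial is mutation invariant. So it suffices to treat the single ordering
\[
P\bigl(a,-a-2,q_1,-q_1,\ldots,q_r,-q_r,-\tfrac{(a+1)^2}{2}\bigr),
\]
in which the even parameter $2m=-(a+1)^2/2$ comes last. Here $m=-(a+1)^2/4$ is an integer, since $a+1\equiv2\pmod 4$.

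\emph{Step 2: a closed formula.} I will apply the Conway skein relation at one crossing of the even twist region and iterate. This should give
\[
\nabla_K(z)=\nabla_{K_0}(z)+m\,z\,\nabla_{L_\infty}(z).
\]
Here $K_0$ is the result of replacing $2m$ by $0$, which is the connected sum $T(2,a)\#T(2,-a-2)\#\bigl(\#_i\,T(2,q_i)\#T(2,-q_i)\bigr)$. The link $L_\infty$ is the $\infty$-resolution, a two-component link. Its Conway polynomial should be a symmetric sum of the form $\sum_j\prod_{i\ne j}\nabla_{T(2,p_i)}$, up to a factor of $z$, taken over the odd parameters $p_i$. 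I will verify the sign and normalisation on small examples such as $P(1,-3,-2)$ and $P(a,-a-2,-(a+1)^2/2)$ with $r=0$, by comparing against the formula of Kim--Lee--Song.

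Next I use $\Delta_{T(2,q)}=\Delta_{T(2,-q)}=\Phi_q(t):=(t^q+1)/(t+1)$, up to units. Each pair $(q,-q)$ then contributes a factor $\Phi_q^2$ to the $K_0$ term. To the $L_\infty$ term it contributes $\Phi_q^2$ times a linear expression in the reciprocals of $q$ and $-q$. I expect the $\pm q$ contributions to cancel in that linear expression, leaving
\[
\Delta_K\doteq\Bigl(\prod_i\Phi_{q_i}(t)^2\Bigr)\cdot\Delta_{P_a}(t).
\]
If that cancellation fails, I will still have an explicit formula of the shape $\Delta_K=\prod_i\Phi_{q_i}^2\cdot(\Delta_{P_a}+E)$. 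In that case I need to control $E$ modulo $p$ and at the roots of the chosen $g$.

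\emph{Step 3: the core case $r=0$.} Once the $q_i$ are disposed of, the theorem reduces to showing that $\Delta_{P_a}$ has a self-reciprocal irreducible factor of odd multiplicity modulo some prime $p$. The factor $\prod_i\Phi_{q_i}^2$ is harmless because every multiplicity in it is even. I will write $\Delta_{P_a}$ explicitly in terms of $(t^a+1)$, $(t^{a+2}+1)$ and $(t+1)$, with coefficient $m=-(a+1)^2/4$.

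For $p=3$ and $p=5$, whichever divides the relevant quantity, I will reduce modulo $p$. Using $a\equiv1\pmod 4$, I will locate a root of unity $\zeta$ of small order, possibly in an extension of $\mathbb F_p$, at which $\Delta_{P_a}$ vanishes to exactly first order. The minimal polynomial of such a $\zeta$ over $\mathbb F_p$ is self-reciprocal when $\zeta^{-1}$ is Galois-conjugate to $\zeta$. The residual classes $a\equiv1,97\pmod{120}$ suggest that the argument must use $a$ modulo $8$, $3$ and $5$ simultaneously. So I would split into cases by $a\bmod 120$ and choose $p$ and $\zeta$ per case.

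\emph{Main obstacle.} I expect the hardest part to be this choice of $(p,\zeta)$ uniformly for all $a\equiv1\pmod 4$. It is especially delicate for $a\equiv1,97\pmod{120}$, where the KLS obstructions fail. There I would first try a $2$-adic or $\mathbb F_2$ analysis. Modulo $2$, $\Delta_{P_a}$ collapses to a combination of $(1+t)^k$ and $\Phi_a,\Phi_{a+2}$, and $m$ is odd or even according to $a\bmod 8$. If that gives no odd-multiplicity self-reciprocal factor, I will fall back on showing that $\Delta_{P_a}(\zeta)$ fails to be a norm from $\mathbb Q(\zeta+\zeta^{-1})$ for a suitable prime-power root of unity $\zeta$. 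This uses the standard refinement that $f(\zeta)f(\zeta^{-1})$ is a norm.
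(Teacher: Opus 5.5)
Your Steps 1 and 2 work in outline, and the cancellation you expect in Step 2 does happen. Let $f_p$ be the Conway polynomial of the closure of the $2$-braid $\sigma_1^{p}$. Give the two-component link $L_\infty$, with odd columns $p_1,\dots,p_{2\ell}$, the orientation inherited from $K$. Then, up to $z\mapsto -z$, one has $\nabla_{L_\infty}=\ell z\prod_j f_{p_j}+\sum_j f_{p_j-1}\prod_{i\ne j}f_{p_i}$, which is not quite the shape you guessed. For odd $q$ we have $f_{-q}=f_q$, $f_{-q-1}=-f_{q+1}$ and $f_{q-1}-f_{q+1}=-zf_q$, so each pair $(q,-q)$ multiplies $\nabla_{L_\infty}$ by $f_q^2$. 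Hence $\Delta_K\doteq\prod_i\Phi_{q_i}^2\,\Delta_{P_a}$. The paper avoids this computation: Lecuona's ribbon move gives a concordance from $K$ to $P_a$, and the parity of each self-reciprocal factor's multiplicity is a concordance invariant.

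The genuine gap is Step 3, which carries the whole content of the theorem. It is a search strategy, not an argument, and as stated it cannot succeed. With $\Phi_n=(t^n+1)/(t+1)$ one has $\Delta_{P_a}\doteq\Phi_a\Phi_{a+2}-\tfrac{(a+1)^2}{4}\,t^{a-1}(t-1)^2$. Fix a prime $p$ and $\zeta\in\overline{\mathbb F}_p^{\times}$ of order $n$. For odd $a$, the value of this expression at $\zeta$ depends only on $a$ modulo $N=4pn$. For $a\equiv1\pmod N$ it therefore equals the value of the $a=1$ expression, which is identically $t$, so it equals $\zeta\neq0$. Consequently, for any finite list of pairs $(p,\zeta)$, infinitely many $a\equiv1\pmod{120}$ have none of the chosen $\zeta$ as a root of $\Delta_{P_a}\bmod p$. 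This is exactly the class you flag as delicate. A fixed self-reciprocal $g\in\mathbb F_2[t]$ fails the same way, since this is just the case $p=2$.

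So a case split by $a$ modulo any fixed number, with fixed choices of $(p,\zeta)$, cannot work. An obstruction of your type needs $p$ or $\operatorname{ord}\zeta$ to grow with $a$, and you give no mechanism for that. Your norm fallback is likewise unspecified.

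The paper's argument is different in kind. Put $b=a+2$ and $k=(a-1)/2$. It substitutes $t=(1+u)/(1-u)$ and parametrizes $v^2-u^2=ab$ by $s=u+v$. A Fox--Milnor factorization then descends to $Q(s)=-H(s)H(-s)$ with $H\in\Z[s]$ monic. Content considerations fix $|H(\pm a)|=2^a a^k$ and $|H(\pm b)|=2^a b^k$. A congruence modulo $(a+1)/2$, a modulus that grows with $a$, forces $H(a)$ and $H(b)$ to have opposite signs. An elementary estimate gives $Q>0$ on $[a,b]$, contradicting the intermediate value theorem. Your plan is missing this combination of a growing modulus with an archimedean positivity argument.
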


Together with \cite[Theorem~4.2]{KLS2022} and
Proposition~\ref{prop:pairedreduction}, Theorem~\ref{thm:main}
gives the following result for every odd $a\ge3$.

\begin{corollary}\label{cor:allpaired}
Let $a\ge3$ be odd, and let $q_1,\ldots,q_r$ be positive odd integers.  Every knot in $\mathcal P(a;q_1,\ldots,q_r)$ fails the Fox--Milnor condition and hence is neither algebraically nor topologically slice.
\end{corollary}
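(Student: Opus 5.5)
The plan is to split on the residue of $a$ modulo $4$. Since $a$ is odd, either $a\equiv1\pmod4$ or $a\equiv3\pmod4$, and each case is handled by a result already in hand.

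\emph{Case $a\equiv1\pmod4$.} Here the statement is exactly Theorem~\ref{thm:main}. That theorem already covers every reordering of the parameter list \eqref{eq:pairedfamily}, for arbitrary positive odd $q_1,\ldots,q_r$ (including $r=0$). So nothing further is needed in this case.

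\emph{Case $a\equiv3\pmod4$.} Here \cite[Theorem~4.2]{KLS2022} shows that $\Delta_{P_a}(t)$ admits no factorization $f(t)f(t^{-1})$. This is the case $r=0$ in the standard ordering. To pass to an arbitrary $K\in\mathcal P(a;q_1,\ldots,q_r)$ I will invoke Proposition~\ref{prop:pairedreduction}. I expect it to say that a Fox--Milnor factorization of $\Delta_K$ for some $K$ in the paired family forces one for $\Delta_{P_a}$. It should also say that reordering the parameters does not affect the relevant obstruction.

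If that proposition is phrased as a reduction of the kind of obstruction used (for example, a congruence or a reduction modulo a prime that is insensitive to inserting $(q,-q)$ pairs and to reordering), then the same step applies to the KLS argument. For this I would verify that the KLS obstruction for $a\equiv3\pmod4$ is of the type the proposition transports. This compatibility check between the form of Proposition~\ref{prop:pairedreduction} and the specific invariant KLS use is the only point that needs care. I expect it to be the main obstacle, modest as it is. Granting it, every $K$ in the family has $\Delta_K$ failing \eqref{eq:FMintro}.

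\emph{Conclusion.} Combining the two cases, $\Delta_K$ fails the Fox--Milnor condition for every odd $a\ge3$ and every $K\in\mathcal P(a;q_1,\ldots,q_r)$. By Fox--Milnor and Levine, algebraically slice knots satisfy \eqref{eq:FMintro}, so $K$ is not algebraically slice. Since topologically slice knots are algebraically slice, $K$ is not topologically slice either.
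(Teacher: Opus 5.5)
Your proposal is correct and follows the paper's proof: Theorem~\ref{thm:main} handles $a\equiv1\pmod4$, and \cite[Theorem~4.2]{KLS2022} combined with Proposition~\ref{prop:pairedreduction} handles $a\equiv3\pmod4$. The compatibility check you flag as the main obstacle is unnecessary. Proposition~\ref{prop:pairedreduction} is an equivalence about the existence of a Fox--Milnor factorization itself, proved via concordance invariance, so it transfers the conclusion of \cite{KLS2022} regardless of how that paper obtains it.
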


Lecuona's exceptional set $\mathcal E$ is given by
\eqref{eq:pairedfamily} with $q_i\ge3$ odd and
\[
a\equiv1,11,37,47,49,59\pmod{60};
\]
see \cite[Theorem~1.1 and Conjecture~1.3]{Lecuona2015}.
Thus Corollary~\ref{cor:allpaired} applies to every knot in
$\mathcal E$.

\begin{corollary}\label{cor:exceptional}
No knot in Lecuona's exceptional set $\mathcal E$ is algebraically
slice.  Consequently, Conjecture~1.3 of \cite{Lecuona2015} holds and
the nonexceptional hypothesis may be removed from
\cite[Theorem~1.1]{Lecuona2015}.  No member of the residual exceptional
family of \cite{LecuonaWand2026} is ribbon.
\end{corollary}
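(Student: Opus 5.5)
The plan is to reduce everything to Corollary~\ref{cor:allpaired} and the chain of implications ribbon $\Rightarrow$ smoothly slice $\Rightarrow$ topologically slice $\Rightarrow$ algebraically slice. First I would unwind the definition of $\mathcal E$ from \cite[Theorem~1.1 and Conjecture~1.3]{Lecuona2015}. A member of $\mathcal E$ is, up to reordering, a pretzel knot with parameter list \eqref{eq:pairedfamily}, where $a\ge3$ is odd with $a\equiv1,11,37,47,49,59\pmod{60}$ and the $q_i\ge3$ are odd. If Lecuona's set is closed under mirrors, it also contains the mirror images of such knots. Every such knot with this exact parameter list lies in $\mathcal P(a;q_1,\ldots,q_r)$ for an odd $a\ge3$ and positive odd $q_i$. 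So Corollary~\ref{cor:allpaired} says $\Delta_K$ has no Fox--Milnor factorization \eqref{eq:FMintro}. For a mirror image $\overline K$, I would use $\Delta_{\overline K}(t)\doteq\Delta_K(t^{-1})\doteq\Delta_K(t)$, so the obstruction persists. Since algebraically slice knots satisfy \eqref{eq:FMintro} by Fox--Milnor and Levine, no knot of $\mathcal E$ is algebraically slice. By the chain of implications, none is topologically slice, smoothly slice, or ribbon.

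Next I would deduce the statements about \cite{Lecuona2015}. Conjecture~1.3 there asserts that the knots of $\mathcal E$ are not slice, which is exactly what was just shown. Theorem~1.1 of \cite{Lecuona2015} states, for nonexceptional three-strand (and more general) pretzel knots with one even parameter, that sliceness is equivalent to ribbonness, which is equivalent to membership in explicitly listed families. For knots in $\mathcal E$ both "slice" and "ribbon" are now false. The one point to verify is that the listed ribbon families do not meet $\mathcal E$. This should hold because those families consist of ribbon knots, and we have just proved that no knot of $\mathcal E$ is ribbon. Hence the equivalences hold on $\mathcal E$ as well, and the nonexceptional hypothesis can be dropped.

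Finally, for \cite{LecuonaWand2026} I would check from that paper's definition that each member of the residual exceptional family is, up to reordering and mirroring, a knot of the form \eqref{eq:pairedfamily}. I expect these to lie in $\mathcal E$, or at least in some $\mathcal P(a;q_1,\ldots,q_r)$ with $a\ge3$ odd. The previous paragraph's argument then applies: such a knot is not algebraically slice, hence not ribbon.

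The main obstacle is bookkeeping rather than mathematics. The three papers use different orientation, sign, and ordering conventions, so the bulk of the work is confirming that every knot in $\mathcal E$ and in the Lecuona--Wand residual family really matches the parameter list \eqref{eq:pairedfamily}, or its mirror, with $a\ge3$. In particular, the residue $a\equiv1\pmod{60}$ must not sneak in the degenerate value $a=1$; this is excluded by Lecuona's standing hypothesis $a\ge3$.
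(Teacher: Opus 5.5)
Your proposal is correct and follows the paper's route. The paper likewise notes that $\mathcal E$, and with it the Lecuona--Wand residual family, lies inside the families $\mathcal P(a;q_1,\ldots,q_r)$ covered by Corollary~\ref{cor:allpaired}, handles mirrors by Remark~\ref{rem:mirrors}, and reads off the consequences for \cite{Lecuona2015} and \cite{LecuonaWand2026}. Your remark that Lecuona's listed ribbon families cannot meet $\mathcal E$, because they consist of ribbon knots, spells out a step the paper leaves implicit.
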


\begin{corollary}[Slice--ribbon for three-strand pretzel knots]
\label{cor:3strand}
A three-strand pretzel knot is smoothly slice if and only if it is ribbon.
Moreover, it is topologically slice if and only if it is ribbon or has
trivial Alexander polynomial.
\end{corollary}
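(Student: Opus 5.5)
The argument is a case split on the parities of the three parameters. In each case the known classifications in the literature leave only the family $P_a$ of \eqref{eq:coreintro} and its mirrors unresolved, and Corollary~\ref{cor:allpaired} with $r=0$ then removes that family.

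The easy implications come first. Ribbon knots are smoothly slice, and smoothly slice knots are topologically slice. A knot with $\Delta_K\doteq1$ is topologically slice by Freedman's theorem. So only the converses need proof:
\begin{itemize}
\item a smoothly slice three-strand pretzel knot is ribbon;
\item a topologically slice three-strand pretzel knot with $\Delta_K\not\doteq1$ is ribbon.
\end{itemize}
For a three-strand pretzel link to be a knot, either all parameters are odd or exactly one is even. Cyclic permutations and reversal generate all of $S_3$, so reordering the parameters does not change the knot type. Hence the even parameter, if there is one, may be placed anywhere.

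\emph{All parameters odd.} Greene and Jabuka \cite{GreeneJabuka2011} proved that such a knot is smoothly slice if and only if it is ribbon. For the topological statement I would rely on the existing topological analysis of this case, which the abstract credits to the combined work of Lecuona, Miller, and Kim--Lee--Song. A knot in this case has genus one and $\Delta_K=\omega t^{-1}+(1-2\omega)+\omega t$ with $\omega=(pq+qr+rp+1)/4$. The knots with $\Delta_K\not\doteq 1$ and square determinant would have to be matched against that literature. I expect this to be the step needing the most care in bookkeeping, namely checking that the cited results really cover every odd triple. It involves no new mathematics.

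\emph{Exactly one even parameter.} Lecuona \cite{Lecuona2015} showed that such a knot is ribbon unless, after reordering and possibly mirroring, it is one of the $P_a$ with $a\ge3$ odd. Miller \cite{Miller2017} gave the corresponding topological classification, again modulo the same $P_a$. It therefore suffices to show that no $P_a$ or its mirror is topologically slice.

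Take $r=0$ in Corollary~\ref{cor:allpaired}, which combines Theorem~\ref{thm:main} for $a\equiv1\pmod4$ with \cite[Theorem~4.2]{KLS2022} for $a\equiv3\pmod4$. It gives that $\Delta_{P_a}$ admits no factorization $f(t)f(t^{-1})$. Mirroring replaces $\Delta$ by $\Delta(t^{-1})\doteq\Delta(t)$, so the same holds for the mirrors.

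Failure of Fox--Milnor has two consequences here:
\begin{itemize}
\item $\Delta_{P_a}\not\doteq 1$, since $1=1\cdot1$ is a Fox--Milnor factorization;
\item $P_a$ is not topologically slice, and hence not smoothly slice.
\end{itemize}
So the $P_a$ are exactly the knots that are neither ribbon nor topologically slice, and both equivalences of the corollary hold in this case.
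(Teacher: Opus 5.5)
Your proposal is correct and follows essentially the paper's route. Ribbon $\Rightarrow$ slice $\Rightarrow$ topologically slice and Freedman's theorem give the easy directions. The cited classifications of Greene--Jabuka, Lecuona, Miller and Kim--Lee--Song reduce both converses to $P_a$ and its mirrors. The failure of the Fox--Milnor condition (which in particular gives $\Delta_{P_a}\not\doteq1$), together with $\Delta_{\overline K}\doteq\Delta_K$, then removes that family.

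The differences are minor. You invoke Corollary~\ref{cor:allpaired} for every odd $a$, whereas the paper only needs Theorem~\ref{thm:main} for the residual classes $a\equiv1,97\pmod{120}$ left by the earlier work. The odd-parameter topological case you leave as ``bookkeeping'' is settled outright by Miller's classification \cite{Miller2017}: an odd three-strand pretzel knot is topologically slice exactly when it is ribbon or has trivial Alexander polynomial. Finally, your paraphrase of Lecuona should say that a \emph{slice} pretzel knot with one even parameter is ribbon unless it is some $P_a$ or its mirror.
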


\begin{corollary}\label{cor:LW}
The classification of prime fibered ribbon pretzel knots up to
reordering of parameters in \cite[Theorem~1.1]{LecuonaWand2026}
holds without the nonexceptional hypothesis.
\end{corollary}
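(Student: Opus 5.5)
The plan is to deduce Corollary~\ref{cor:LW} formally from Corollary~\ref{cor:exceptional}, which in turn rests on Theorem~\ref{thm:main} and Corollary~\ref{cor:allpaired}. No new computation should be needed.

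First I will recall the shape of \cite[Theorem~1.1]{LecuonaWand2026}. It asserts that a prime fibered pretzel knot which is \emph{not} in their residual exceptional family is ribbon if and only if, up to reordering of parameters, it lies in an explicit list $\mathcal L$. The residual exceptional family consists of pretzel knots whose parameter lists have the form \eqref{eq:pairedfamily} for suitable odd $a\ge3$ and odd $q_i$ (or mirrors of these), possibly with congruence restrictions on $a$. The first step is to check this containment carefully against their definition. In particular I must confirm that every member of their residual family lies in some $\mathcal P(a;q_1,\ldots,q_r)$ or its mirror image, so that Corollary~\ref{cor:allpaired}, and hence the last sentence of Corollary~\ref{cor:exceptional}, applies. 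Mirroring is harmless, since $\Delta_{\overline K}\doteq\Delta_K$ and the Fox--Milnor condition is invariant under $t\mapsto t^{-1}$.

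Next I will argue that removing the hypothesis changes nothing. Let $K$ be a prime fibered pretzel knot in the residual family. By Corollary~\ref{cor:exceptional}, $\Delta_K$ fails \eqref{eq:FMintro}, so $K$ is not algebraically slice. By the chain of implications in the introduction, $K$ is then not ribbon. On the other side, I must check that no knot in the list $\mathcal L$ lies in the residual family, so that the biconditional is not broken in the other direction. Since every knot in $\mathcal L$ is ribbon by \cite{LecuonaWand2026}, it is algebraically slice, and the previous sentence shows it cannot lie in the residual family. Thus for every prime fibered pretzel knot, exceptional or not, ribbon holds exactly when the knot lies in $\mathcal L$ up to reordering, which is the classification with the hypothesis removed.

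The only real obstacle is bookkeeping: matching the residual exceptional family of \cite{LecuonaWand2026}, with its sign conventions and orderings, to the families $\mathcal P(a;q_1,\ldots,q_r)$ used here. I will first compare the sign convention of \cite{LecuonaWand2026} with that of \cite{Lecuona2015}, passing to mirrors where necessary. Then I will verify that their parameter constraints impose only conditions on $a$ and the $q_i$ that Corollary~\ref{cor:allpaired} already covers: $a\ge3$ odd and $q_i$ positive odd, in any order.
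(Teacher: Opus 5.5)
Your proposal is correct and follows the paper's route. The paper deduces Corollary~\ref{cor:LW} in one line from Corollary~\ref{cor:exceptional}, observing that no ordering of an exceptional parameter set defines a ribbon knot, which is your two-directional check stated tersely. Your explicit check that the residual family of \cite{LecuonaWand2026} lies in the families $\mathcal P(a;q_1,\ldots,q_r)$, up to mirrors and reordering, is bookkeeping the paper leaves implicit.
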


Section~\ref{sec:reduction} reduces the proof to the knots $P_a$.
Two rational changes of variables then turn a hypothetical Fox--Milnor
factorization into an identity $Q(s)=-H(s)H(-s)$, where $H$ has integer
coefficients.  Gauss's lemma determines the absolute values of $H$ at
two integer endpoints.  A congruence forces the endpoint signs to be
opposite, while an elementary inequality shows that $Q$ is positive
between them.  The intermediate value theorem gives the contradiction. \newline

\textbf{AI Declaration:}
The mathematical content presented in this manuscript was produced entirely without the aid of AI; AI tools were utilized strictly for minor language editing and to help write TikZ code for the figures.

\section{Paired extensions and the Fox--Milnor condition}\label{sec:reduction}

If $g\in\Z[t,t^{-1}]$ is irreducible, call it \emph{self-reciprocal} if $g(t)\doteq g(t^{-1})$.  A reciprocal Laurent polynomial admits a Fox--Milnor factorization precisely when every irreducible self-reciprocal factor occurs with even multiplicity; see, for example, \cite[Remark~2.2]{KLS2022}.

\begin{lemma}\label{lem:concordanceFM}
If two knots $K$ and $J$ are concordant, then $\Delta_K(t)$ admits a Fox--Milnor factorization if and only if $\Delta_J(t)$ does.
\end{lemma}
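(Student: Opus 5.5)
The plan is to reduce the statement to the classical fact that concordance preserves the Alexander polynomial up to a Fox--Milnor factor. If $K$ and $J$ are concordant, then $K\#(-J)$ is slice, where $-J$ denotes the mirror image of $J$ with reversed orientation. By Fox--Milnor \cite{FoxMilnor1966}, there is then $h\in\Z[t,t^{-1}]$ with
\[
\Delta_K(t)\,\Delta_{J}(t)\doteq h(t)h(t^{-1}),
\]
using $\Delta_{-J}(t)\doteq\Delta_J(t)$ and the multiplicativity of the Alexander polynomial under connected sum. Alternatively, Levine's theorem \cite{Levine1969} gives that the Seifert forms of $K$ and $J$ are algebraically concordant, which leads to the same identity.

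Next I will translate the Fox--Milnor condition into the multiplicity criterion recalled just before the lemma: a reciprocal Laurent polynomial admits a Fox--Milnor factorization if and only if every irreducible self-reciprocal factor occurs with even multiplicity. All three polynomials $\Delta_K$, $\Delta_J$ and $h(t)h(t^{-1})$ are reciprocal. Since $\Z[t,t^{-1}]$ is a UFD, I will factor into irreducibles and compare multiplicities.

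Fix an irreducible self-reciprocal $g$. Its multiplicity in $h(t)h(t^{-1})$ equals $m+m$, where $m$ is its multiplicity in $h$; this uses $g(t^{-1})\doteq g(t)$, so $g$ occurs in $h(t^{-1})$ exactly $m$ times. That multiplicity is even. Therefore
\[
\operatorname{mult}_g\Delta_K+\operatorname{mult}_g\Delta_J\equiv 0\pmod 2,
\]
so the two multiplicities have the same parity. It follows that all such multiplicities are even for $\Delta_K$ if and only if they are all even for $\Delta_J$, and the lemma follows from the criterion.

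The only delicate point is the bookkeeping with units $\pm t^n$ and with the nonsymmetric factors. Non-self-reciprocal irreducible factors play no role in the criterion, so they can be ignored. The units are absorbed by using $\doteq$ throughout. I do not expect any real obstacle: the substantive input is the Fox--Milnor theorem itself, which the paper cites.
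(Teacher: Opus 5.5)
Your proposal is correct and follows the paper's proof essentially step for step. Both arguments use that $K\#(-J)$ is slice, apply Fox--Milnor to $\Delta_K\Delta_J$, and compare the parities of the multiplicities of self-reciprocal irreducible factors using the cited criterion. You also spell out why each self-reciprocal factor occurs with even multiplicity in $h(t)h(t^{-1})$, a step the paper leaves implicit.
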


\begin{proof}
Write $-J$ for the reversed mirror of $J$.  Since $K\mathbin{\#}(-J)$ is slice,
\[
\Delta_K(t)\Delta_J(t)\doteq\Delta_{K\mathbin{\#}(-J)}(t)
\]
admits a Fox--Milnor factorization.  Thus, for each self-reciprocal irreducible Laurent polynomial, the sum of its multiplicities in $\Delta_K$ and $\Delta_J$ is even.  Those two multiplicities consequently have the same parity.  The factor-multiplicity criterion above proves the equivalence.
\end{proof}

For an adjacent pair $(q,-q)$, with $q\ge3$ odd, we use the
standard ribbon reduction of \cite[Proposition~2.1]{Lecuona2015}.
Figure~\ref{fig:global-ribbon-move} records its global effect, and
Figure~\ref{fig:local-ribbon-move} shows the local band surgery.
It transforms $P(q,-q,R)$ into the split
union
\[
U\sqcup P(R),\]
where $U$ is an unknot and $P(R)$ is obtained by deleting the pair
$q,-q$.  Capping off $U$ yields a concordance from $P(q,-q,R)$ to
$P(R)$.

%%=====================================================================
%%  FIGURE 1 : the global move
%%=====================================================================
\begin{figure}[htbp]
\centering
\begin{tikzpicture}[scale=.71]

%%---------------- (a)  P(q,-q,R) ----------------
\node[panellab] at (4,3.00) {(a)};

\Tbox{0}{q}\Tbox{1.6}{-q}\Tbox{3.2}{r_1}\Tbox{4.8}{r_2}\Tbox{8.0}{r_m}
\Dslot{6.4}

\Tarc{0.30}{1.30}{.75}\Barc{0.30}{1.30}{.75}
\Tarc{1.90}{2.90}{.75}\Barc{1.90}{2.90}{.75}
\Tarc{3.50}{4.50}{.75}\Barc{3.50}{4.50}{.75}
\Tarc{5.10}{5.95}{.70}\Barc{5.10}{5.95}{.70}
\Tarc{6.85}{7.70}{.70}\Barc{6.85}{7.70}{.70}
\Tarc[12pt]{-0.30}{8.30}{1.85}\Barc[12pt]{-0.30}{8.30}{1.85}

\draw[line width=.6pt,dashed,rounded corners=4pt]
      (-0.95,-1.35) rectangle (2.40,1.35);

\Ubrace{-0.55}{2.15}{-2.50}
\node[bracelab] at (0.80,-3.12) {adjacent opposite pair};
\Ubrace{2.65}{8.55}{-2.50}
\node[bracelab] at (5.60,-3.12) {$R=(r_1,\dots,r_m)$};

%%---------------- arrow ----------------
\draw[line width=.7pt,-{Stealth[length=2.2mm,width=1.6mm]}] (9.35,0) -- (11.85,0);
\node[font=\small,align=center] at (10.60,0.95) {ribbon move\\and isotopy};

%%---------------- (b)  U  and  P(R) ----------------
\node[panellab] at (17.75,3.00) {(b)};

\draw[strand] (13.30,0.35) circle[radius=.68];
\node[font=\small] at (13.30,-0.75) {$U$};

\Tbox{15.2}{r_1}\Tbox{16.8}{r_2}\Tbox{20.0}{r_m}
\Dslot{18.4}
\Tarc{15.50}{16.50}{.75}\Barc{15.50}{16.50}{.75}
\Tarc{17.10}{17.95}{.70}\Barc{17.10}{17.95}{.70}
\Tarc{18.85}{19.70}{.70}\Barc{18.85}{19.70}{.70}
\Tarc[12pt]{14.90}{20.30}{1.85}\Barc[12pt]{14.90}{20.30}{1.85}

\Ubrace{14.65}{20.55}{-2.50}
\node[bracelab] at (17.60,-3.12) {$P(R)$};

\end{tikzpicture}
\caption{
A ribbon move on the adjacent opposite pair $(q,-q)$, with $q\ge3$ odd, followed by
isotopy, removes that pair and transforms $P(q,-q,R)$ into the split
union $U\sqcup P(R)$, where $U$ is an unknot.
}
\label{fig:global-ribbon-move}
\end{figure}

% Figure 2: local band surgery and cancellation.
\begin{figure}[htbp]
\centering
\begin{tikzpicture}[scale=.82]
% Crossing conventions: the NW--SE arc is over in a positive box.
\newcommand{\pcross}[6]{%
  \draw[strand,#6] (#2,#3) .. controls (#2,{#3-.18}) and (#1,{#4+.18}) .. (#1,#4);
  \draw[strand,over,#5] (#1,#3) .. controls (#1,{#3-.18}) and (#2,{#4+.18}) .. (#2,#4);}
\newcommand{\ncross}[6]{%
  \draw[strand,#5] (#1,#3) .. controls (#1,{#3-.18}) and (#2,{#4+.18}) .. (#2,#4);
  \draw[strand,over,#6] (#2,#3) .. controls (#2,{#3-.18}) and (#1,{#4+.18}) .. (#1,#4);}
\newcommand{\pairbase}[1]{%
  \pcross{-1}{-.4}{1.5}{.95}{black}{black}
  \ncross{.4}{1}{1.5}{.95}{black}{black}
  \draw[strand] (-1,.95)--(-1,.45) (1,.95)--(1,.45);
  \pcross{-1}{-.4}{.45}{-.15}{black}{#1}
  \ncross{.4}{1}{.45}{-.15}{#1}{black}
  \draw[strand,#1] (-1,-.15)--(-1,-.4) (1,-.15)--(1,-.4);
  \draw[strand] (-.4,-.15)--(-.4,-.4) (.4,-.15)--(.4,-.4);
  \pcross{-1}{-.4}{-.4}{-1}{#1}{black}
  \ncross{.4}{1}{-.4}{-1}{black}{#1}
  \draw[strand] (-.4,1.5) .. controls (-.4,2.05) and (.4,2.05) .. (.4,1.5);
  \draw[strand,#1] (-.4,-1) .. controls (-.4,-1.65) and (.4,-1.65) .. (.4,-1);
  \draw[strand] (-1,1.5) .. controls (-1,1.95) and (-1.65,1.9) .. (-1.65,2.2);
  \draw[strand] (1,1.5) .. controls (1,1.95) and (1.65,1.9) .. (1.65,2.2);
  \draw[strand] (-1,-1) .. controls (-1,-1.7) and (-1.65,-1.9) .. (-1.65,-2.2);
  \draw[strand] (1,-1) .. controls (1,-1.7) and (1.65,-1.9) .. (1.65,-2.2);}
\newcommand{\localends}{%
  \draw[strand,densely dotted] (-1.65,2.2)--(-1.65,2.58) (1.65,2.2)--(1.65,2.58)
  (-1.65,-2.2)--(-1.65,-2.58) (1.65,-2.2)--(1.65,-2.58);}

\begin{scope}
  \node at (0,3.0) {(a)};
  \pairbase{black}
  \draw[strand] (-.4,.95)--(-.4,.45) (.4,.95)--(.4,.45);
  \localends
  \node[font=\scriptsize,anchor=east] at (-1.15,.2) {$3$};
  \node[font=\scriptsize,anchor=west] at (1.15,.2) {$-3$};
\end{scope}

\begin{scope}[shift={(4.7,0)}]
  \node at (0,3.0) {(b)};
  \fill[black!20] (-.4,.60) rectangle (.4,.80);
  \pairbase{black}
  \draw[strand] (-.4,.95)--(-.4,.45) (.4,.95)--(.4,.45);
  \draw[densely dashed,line width=.45pt] (-.4,.60)--(.4,.60) (-.4,.80)--(.4,.80);
  \localends
  \node[font=\scriptsize] at (0,1.35) {band};
  \draw[-{Stealth[length=1.2mm]},line width=.45pt] (0,1.15)--(0,.85);
\end{scope}

\draw[-{Stealth[length=1.7mm]},line width=.55pt] (6.55,0)--(7.5,0);
\node[font=\scriptsize] at (7.025,.35) {saddle};

\begin{scope}[shift={(9.4,0)}]
  \node at (0,3.0) {(c)};
  \pairbase{black!55}
  \draw[strand] (-.4,.95) .. controls (-.4,.75) and (.4,.75) .. (.4,.95);
  \draw[strand,black!55] (-.4,.45) .. controls (-.4,.68) and (.4,.68) .. (.4,.45);
  \localends
\end{scope}

\draw[-{Stealth[length=1.7mm]},line width=.55pt] (11.25,0)--(12.2,0);
\node[font=\scriptsize] at (11.725,.35) {isotopy};

\begin{scope}[shift={(14.1,0)}]
  \node at (0,3.0) {(d)};
  \draw[strand] (-1.65,2.2) .. controls (-1.65,.9) and (1.65,.9) .. (1.65,2.2);
  \draw[strand] (-1.65,-2.2) .. controls (-1.65,-.9) and (1.65,-.9) .. (1.65,-2.2);
  \draw[strand,black!55] (0,0) circle[radius=.48];
  \node[font=\small,anchor=west] at (.65,0) {$U$};
  \localends
\end{scope}
\end{tikzpicture}
\caption{The local ribbon move for the adjacent pair $(3,-3)$.
The four exterior ends are fixed throughout.
(a) The original pair. (b) The band attaches to the inner strands
between the first and second crossings. (c) Band surgery replaces the
attaching arcs by the other two sides of the band; the closed component
is drawn in gray. (d) Reidemeister~II cancellations, from top to bottom,
separate the unknot $U$ and leave the two arcs of the shorter pretzel.
The same move works for every odd $q\ge3$.
Compare \cite[Proposition~2.1 and Figure~1]{Lecuona2015}.}
\label{fig:local-ribbon-move}
\end{figure}

\FloatBarrier

\begin{proposition}\label{prop:pairedreduction}
Let $a\ge3$ be odd and let $q_1,\ldots,q_r$ be positive odd integers.  If $K\in\mathcal P(a;q_1,\ldots,q_r)$, then $\Delta_K(t)$ admits a Fox--Milnor factorization if and only if $\Delta_{P_a}(t)$ does.
\end{proposition}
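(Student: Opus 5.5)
The plan is to reduce an arbitrary member of $\mathcal P(a;q_1,\ldots,q_r)$ to $P_a$ in two stages. First, reorder the parameters without changing the Alexander polynomial. Second, delete the pairs $(q_i,-q_i)$ one at a time by concordances, using Lemma~\ref{lem:concordanceFM}.

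\emph{Step 1 (reordering).} Any pretzel knot $P(p_1,\ldots,p_n)$ is the closure of a horizontal sum of the vertical rational tangles $1/p_i$. Swapping two adjacent tangles in such a sum is realized by a Conway mutation: rotate the sub-tangle formed by the two summands by $\pi$ about the appropriate axis, then use the symmetry of rational tangles. Cyclic rotations and reversals of the parameter list are isotopies. Hence every reordering of \eqref{eq:pairedfamily} is related to any other reordering by a sequence of mutations. Since mutants have equal Alexander polynomials, I may assume
\[
K=P(q_1,-q_1,q_2,-q_2,\ldots,q_r,-q_r,\,a,-a-2,-\tfrac{(a+1)^2}{2})
\]
without changing $\Delta_K$ up to $\doteq$. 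Reordering also does not affect whether the parameter list gives a knot, so this $K$ is still a knot. The Fox--Milnor property depends only on $\Delta_K$ up to $\doteq$.

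\emph{Step 2 (removing pairs).} I induct on $r$, the case $r=0$ being trivial. Write $K=P(q,-q,R)$ with $q=q_1$.
\begin{itemize}
\item If $q\ge3$, the ribbon move of Figure~\ref{fig:global-ribbon-move}, followed by capping off $U$, gives a concordance from $K$ to $P(R)$.
\item If $q=1$, the tangles $1/1$ and $1/(-1)$ are the integer tangles $+1$ and $-1$. Their horizontal sum is the $0$ tangle, which is the identity for horizontal sum, so $P(1,-1,R)$ is isotopic to $P(R)$.
\end{itemize}
In either case $P(R)$ is a knot, since it is concordant or isotopic to the knot $K$. Also $P(R)$ lies in $\mathcal P(a;q_2,\ldots,q_r)$. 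By Lemma~\ref{lem:concordanceFM}, $\Delta_K$ admits a Fox--Milnor factorization if and only if $\Delta_{P(R)}$ does, and induction finishes the proof.

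The main point requiring care is Step 1. One must check that an adjacent transposition of pretzel tangles is indeed a mutation, and not only a flype, which is available only for particular tangle types. This holds because each vertical tangle $1/p$ is invariant under the rotations used in Conway mutation. The point matters because the ribbon move applies only to an \emph{adjacent} opposite pair, whereas $\mathcal P$ allows arbitrary orders.

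An alternative route avoids mutation entirely. Use the Seifert-matrix or Goeritz formula for $\Delta$ of a pretzel knot with one even parameter. That formula is visibly symmetric in the odd parameters, and it depends on the position of the even parameter only up to cyclic symmetry. I would fall back on this if the mutation argument needed extra justification.
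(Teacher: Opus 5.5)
Your proposal is correct and follows the paper's proof step by step. You reorder by mutation to $P(q_1,-q_1,\ldots,q_r,-q_r,a,-a-2,-(a+1)^2/2)$, delete each adjacent pair by the ribbon move plus capping off $U$ (or by isotopy when $q_i=1$), and conclude with Lemma~\ref{lem:concordanceFM}. The only differences are presentational: you organize the deletions as an induction on $r$, and you spell out why adjacent transpositions are mutations and why $P(1,-1,R)\cong P(R)$, where the paper instead cites \cite{Lecuona2015} and \cite{LecuonaWand2026}.
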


\begin{proof}
Reordering the parameters of a pretzel knot is achieved by mutation,
and the Alexander polynomial is mutation invariant.  Hence
$\Delta_K(t)\doteq\Delta_{K_0}(t)$, where
\[
K_0=P\!\left(q_1,-q_1,\ldots,q_r,-q_r,a,-a-2,-(a+1)^2/2\right).
\]
For $q_i\ge3$, the ribbon move of
Figures~\ref{fig:global-ribbon-move} and~\ref{fig:local-ribbon-move}
on the adjacent pair $q_i,-q_i$ produces
the split union of an unknot and the pretzel knot obtained by deleting
that pair.  The trace is an orientable pair of pants; capping its split
unknot boundary component gives a smooth annulus, hence a concordance
to the shorter pretzel knot.  When $q_i=1$, the pair $(1,-1)$ can be
deleted by isotopy; see \cite[Section~2]{LecuonaWand2026}.
Successive deletions give a concordance from $K_0$ to $P_a$.
The mutation argument and the ribbon reduction in this setting also
appear in \cite[p.~2169]{Lecuona2015}.
Lemma~\ref{lem:concordanceFM} now gives the equivalence.
\end{proof}
\begin{remark}\label{rem:mirrors}
For the mirror $\overline K$ of a knot $K$, one has $\Delta_{\overline K}(t)\doteq\Delta_K(t^{-1})\doteq\Delta_K(t)$.  Thus every Fox--Milnor obstruction obtained here also applies to mirrors.
\end{remark}

By Proposition~\ref{prop:pairedreduction}, it remains to prove the following core statement.

\begin{theorem}\label{thm:core}
Let $a\ge3$ be odd with $a\equiv1\pmod4$.  Then the Alexander polynomial of $P_a$ does not admit a Fox--Milnor factorization.
\end{theorem}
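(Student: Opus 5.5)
The plan is to compute $\Delta_{P_a}(t)$ in closed form, convert a hypothetical Fox--Milnor factorization into a one-variable identity over $\Z$, and derive a contradiction from a sign change. Throughout write $a=4k+1$ and $c=(a+1)^2/2$, which is even.

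\textbf{Closed form.} I would first express $\Delta_{P_a}(t)$ compactly. Since the third parameter $-c$ is even and the other two are odd, I would compute the Conway polynomial by the skein relation in the even twist region. Each full twist there changes $\nabla$ by a fixed multiple of $z$ times the Conway polynomial of a two-bridge link built from the boxes $a$ and $-a-2$. Summing over $c/2$ twists gives $\Delta_{P_a}$ as a combination of
\[
\frac{t^{a}+1}{t+1},\qquad \frac{t^{a+2}+1}{t+1},\qquad \frac{t^{c}-1}{t-1}
\]
with coefficients linear in $a$. After clearing denominators this becomes a polynomial identity
\[
(t+1)^2\,\Delta_{P_a}(t)\doteq F_a(t),
\]
where $F_a$ is an explicit sparse polynomial. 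I would check the formula against the known values of $\Delta_{P_a}(1)$ and of the determinant $|\Delta_{P_a}(-1)|$.

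\textbf{Changes of variables and the identity $Q=-HH$.} Suppose $\Delta_{P_a}(t)\doteq f(t)f(t^{-1})$. I would first pass to the symmetric variable $x=t^{1/2}+t^{-1/2}$, or equivalently $u=t+t^{-1}$, which turns reciprocal polynomials into ordinary ones. A second rational substitution $s$ would then be chosen so that $t\mapsto t^{-1}$ acts as $s\mapsto -s$ on the factor $f$. A natural candidate is $s=(t-1)/(t+1)$, the Cayley-type map, which sends $t^{-1}$ to $-s$. Clearing the resulting powers of $(1\pm s)$, I expect the factorization to become
\[
Q(s)=-H(s)H(-s),\qquad H\in\Z[s],
\]
where $Q$ is explicit and the sign comes from the normalization $\Delta(1)=1$. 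I would verify that integrality of $H$ survives, using Gauss's lemma on the content.

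\textbf{Endpoints and sign change.} Next I would evaluate at two integer points $s_0<s_1$, most likely the images of $t=-1$ and $t=\infty$ or of $t=\pm 1$. At these points the factors $H(s)$ and $H(-s)$ are related, so $Q(s_i)$ determines $|H(s_i)|$; Gauss's lemma together with the determinant should pin these down as explicit integers in terms of $a$. The hypothesis $a\equiv1\pmod4$ enters here. I would reduce $H$ modulo $4$, or modulo $8$, using $H(s)\equiv H(-s)$ modulo $2$ and the explicit $Q$ modulo small powers of $2$, to show that $H(s_0)$ and $H(s_1)$ have opposite signs. By the intermediate value theorem $H$ then vanishes at some $\sigma\in(s_0,s_1)$, so $Q(\sigma)=0$. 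Finally I would show directly that $Q>0$ on $[s_0,s_1]$. I expect this to follow from an elementary inequality: on this interval $|t|=1$ or $t$ is real positive, and the dominant term of $F_a$, coming from $t^{c}$, should control the lower-order terms. This contradiction proves the theorem.

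\textbf{Main obstacle.} I expect the hardest step to be the congruence forcing opposite endpoint signs, since it must genuinely use $a\equiv1\pmod 4$; KLS already handled the other residue classes by different means. My first attempt would be to compute $H(s_1)-H(s_0)$ modulo $4$, using that $s_1-s_0$ is even and that the coefficients of $H$ satisfy parity constraints inherited from $Q$. I would then compare this with the two possible values $\pm|H(s_0)|\mp|H(s_1)|$ and show that only the opposite-sign choice is consistent.
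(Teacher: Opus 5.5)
\textbf{There is a genuine gap: the single substitution you commit to cannot produce the sign change, and the idea that makes the argument work is missing.} Your outline has the right overall shape: an identity $Q=-H(s)H(-s)$, endpoint values from Gauss's lemma, a congruence forcing a sign change, positivity, and the intermediate value theorem.

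Your $s=(t-1)/(t+1)$ is the paper's first variable $u$. The preliminary symmetrization $t+t^{-1}$ plays no role, since it kills the involution you then want to act as $s\mapsto-s$. In this variable a normalized factorization $\Delta_a(t)=f(t)\,t^af(t^{-1})$, with $f$ monic, becomes
$\mathcal D_m(u)=G(u)G(-u)$, where $G(u)=(1-u)^af\bigl(\frac{1+u}{1-u}\bigr)$. The sign here is plus, not minus.
\begin{itemize}
\item Your candidate endpoints are not two finite integers: the images of $t=-1,\infty$ are $s=\infty,1$, and those of $t=\pm1$ are $s=0,\infty$.
\item The only integer points in the image $[-1,1]$ of $t\in[0,\infty]$ give $G(1)=2^a$, $G(-1)=2^af(0)=2^a\Delta_a(0)=2^a$ and $G(0)=f(1)=\pm1$. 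So no sign change can be forced there.
\item Outside $[-1,1]$, i.e.\ for $t<0$, no sign argument can work, because $\Delta_a(-1)=-1<0<\Delta_a(0)$ produces genuine real zeros.
\end{itemize}

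The missing step is the paper's second, quadratic change of variables. Write $\mathcal D_m(u)=E_m(u)^2-u^2(1-u^2)^{2k}(u^2+D)$ with $D=ab$. This is a difference of squares on the conic $v^2-u^2=D$. Parametrizing the conic by $s=u+v$ gives $\mathcal D_m=L(s)L(-D/s)$, with $L$ even and the two factors coprime. That coprimality forces a Fox--Milnor factor to split the single half $Q(s)=2^as^{a-1}L(s)$ as $-H(s)H(-s)$. Here $H\in\Z[s]$ is monic of odd degree $a$, namely the monic gcd of $Q$ and $s^aG(u(s))$.

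Only in this variable does $t\in[0,\infty]$ become an interval $[a,b]$ with integer endpoints, where $X$ or $Y$ vanishes. For $d\in\{a,b\}$ one has $Q(\pm d)=(2^ad^k)^2$. The content arguments with $s=dy$ and $s=a+2y$ then give $|H(\pm d)|=2^ad^k$ and $H(-d)=-H(d)$.

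\textbf{Your congruence step aims at the wrong modulus.} Since $2^a$ divides both $H(a)$ and $H(b)$, reducing $H$ modulo $4$ or $8$ cannot see their signs. The paper works modulo $m=(a+1)/2$:
\begin{itemize}
\item $b\equiv-a\pmod m$ gives $H(b)\equiv H(-a)=-H(a)$;
\item $a^k\equiv b^k\equiv1\pmod m$ because $k=m-1$ is even;
\item $2^a$ is a unit modulo the odd number $m\ge3$.
\end{itemize}
This is exactly where $a\equiv1\pmod4$ enters.

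\textbf{Your anticipated closed form is also off.} The two-bridge link built from the boxes $a$ and $-a-2$ is the Hopf link. So every full twist in the even region adds the same $\pm z^2$, and there is no $\frac{t^c-1}{t-1}$ term. One gets
$\Delta_a(t)=\frac{(t^{a+2}+1)(t^a+1)}{(t+1)^2}-m^2t^{a-1}(t-1)^2$.
Positivity on $[a,b]$ does not come from a dominant $t^c$ term. It comes from
$L\ge(1-m|u|)^2+m(m-1)u^2$, using $E_m(u)\ge1+m(2m-1)u^2$, $0<v\le2m$ and $0\le(1-u^2)^k\le1$.
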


\section{An auxiliary even polynomial}\label{sec:auxiliary}

Write
\[
a=2m-1,\qquad b=2m+1,\qquad k=m-1,\qquad D=ab=4m^2-1.
\]
Throughout the proof of Theorem~\ref{thm:core}, $m\ge3$ is odd and
$k\ge2$ is even.
Lecuona \cite[pp.~2151--2152]{Lecuona2015} computed the following
representative of $\Delta_{P_a}$:
\begin{equation}\label{eq:alex}
\Delta_a(t)=
\frac{(t^{a+2}+1)(t^a+1)}{(t+1)^2}
-m^2t^{a-1}(t-1)^2.
\end{equation}
It is monic of degree $2a$ and satisfies
$\Delta_a(0)=\Delta_a(1)=1$.

The substitution $t=(1+u)/(1-u)$ changes the involution
$t\mapsto t^{-1}$ into $u\mapsto-u$.  Set
\[
E_m(u)=\frac{(1+u)^{2m}+(1-u)^{2m}}2,
\qquad
\mathcal D_m(u)=(1-u)^{2a}
\Delta_a\!\left(\frac{1+u}{1-u}\right).
\]
After substitution and denominator clearing, the first term of
\eqref{eq:alex} becomes
$E_m(u)^2+u^2(1-u^2)^{2m-1}$, and the second becomes
$4m^2u^2(1-u^2)^{2m-2}$.  Hence
\begin{equation}\label{eq:normu}
\mathcal D_m(u)
=E_m(u)^2-u^2(1-u^2)^{2k}(u^2+D).
\end{equation}

To factor this difference of squares, introduce $v$ with
$v^2-u^2=D$.  Taking $s=u+v$ gives the parametrization
\begin{equation}\label{eq:param}
u=\frac12\left(s-\frac Ds\right),
\qquad
v=\frac12\left(s+\frac Ds\right).
\end{equation}
Define
\[
L(s)=E_m(u)+u(1-u^2)^k v.
\]
The substitution $s\mapsto-D/s$ fixes $u$ and negates $v$;
$s\mapsto-s$ negates both.  Thus
\begin{equation}\label{eq:Lnorm}
\mathcal D_m(u(s))=L(s)L(-D/s),
\qquad L(-s)=L(s).
\end{equation}

We next clear the denominators in $L$.  Put
\[
c=D+2,\qquad
X=(s-a)(s+b)=s^2+2s-D,\qquad
Y=(s-b)(s+a)=s^2-2s-D.
\]
Since $X-Y=4s$, the expression
\begin{equation}\label{eq:B0}
B(s)=\frac{X^m-Y^m}{2s}
=2\sum_{j=0}^{k}X^{k-j}Y^j
\end{equation}
is an integer polynomial.  It is even, because $s\mapsto-s$
interchanges $X$ and $Y$.

\begin{proposition}\label{prop:Q}
The polynomial
\begin{equation}\label{eq:Qcanonical}
Q(s)=B(s)^2+(XY)^k(s^2-c)
\end{equation}
is even, monic, and of degree $2a$, and satisfies
\begin{equation}\label{eq:LQ}
L(s)=\frac{Q(s)}{2^a s^{a-1}}.
\end{equation}
Moreover,
\begin{equation}\label{eq:endQ}
Q(a)=2^{2a}a^{2k},\qquad Q(b)=2^{2a}b^{2k},
\end{equation}
and
\begin{equation}\label{eq:Qzero}
Q(0)=-D^{2k}\ne0.
\end{equation}
\end{proposition}

\begin{proof}
The parametrization gives
\[
1+u=\frac{X}{2s},\qquad 1-u=-\frac{Y}{2s},
\qquad 1-u^2=-\frac{XY}{4s^2}.
\]
Since $k$ is even,
\begin{align*}
2^a s^{a-1}E_m(u)&=\frac{X^{2m}+Y^{2m}}{4s^2},\\
2^a s^{a-1}u(1-u^2)^kv
&=\frac{(s^4-D^2)(XY)^k}{2s^2}.
\end{align*}
Using
\[
B(s)^2=\frac{X^{2m}+Y^{2m}-2(XY)^m}{4s^2},
\qquad XY+s^4-D^2=2s^2(s^2-c),
\]
we obtain \eqref{eq:LQ}.  In \eqref{eq:Qcanonical}, the first term
has degree $4k$, while the second is monic of degree $4k+2=2a$.
Both are even integer polynomials.

At $s=a$ and $s=b$, respectively, $X=0$ and $Y=0$.
Equation~\eqref{eq:B0} gives
\[
B(a)=2^aa^k,\qquad B(b)=2^ab^k,
\]
which proves \eqref{eq:endQ}.  At $s=0$, we have
$X=Y=-D$ and $B(0)=2mD^k$.  Therefore
\[
Q(0)=(4m^2-c)D^{2k}=-D^{2k}.\qedhere
\]
\end{proof}

\begin{lemma}\label{lem:positiveinterval}
For every real $s\in[a,b]$, we have $Q(s)>0$.
\end{lemma}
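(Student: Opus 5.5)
The plan is to exploit the signs of $X$ and $Y$ on $[a,b]$ and show that $B(s)^2$ dominates the possibly negative term $(XY)^k(s^2-c)$.

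\textbf{Signs on the interval.} For $s\in[a,b]$ we have $X=(s-a)(s+b)\ge0$ and $Y=(s-b)(s+a)\le0$, and $X,Y$ do not both vanish. Since $k$ is even, $(XY)^k=(X|Y|)^k\ge0$. Since $m$ is odd, $-Y^m=|Y|^m$, so by \eqref{eq:B0}
\[
B(s)=\frac{X^m+|Y|^m}{2s}>0 .
\]
If $s^2\ge c$, then $Q(s)\ge B(s)^2>0$ directly. It remains to treat $a\le s<\sqrt c$, where we must prove $B^2>(X|Y|)^k(c-s^2)$.

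\textbf{Regime 1: $s^2\ge D^2/(D+2)$.} Here I use AM--GM, $X^m+|Y|^m\ge 2(X|Y|)^{m/2}$, which gives $B^2\ge (X|Y|)^{k+1}/s^2$. It therefore suffices that $X|Y|>s^2(c-s^2)$. Using $XY=(s^2-D)^2-4s^2$, this inequality simplifies to
\[
s^2(D+2)-D^2>0 .
\]
This holds precisely in the present regime, with strict inequality when $s^2>D^2/(D+2)$. The boundary point is harmless because the other regime also covers it. As a sanity check, at $s^2=D$ the left side equals $2D>0$.

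\textbf{Regime 2: $a\le s\le D/\sqrt{D+2}$.} Near the endpoint $a$ the AM--GM bound is too weak, since $X\to0$. Instead I drop $X^m$ and use $B\ge |Y|^m/(2s)$. In this range $s^2<D$, so $X+Y=2(s^2-D)<0$ and hence $X<|Y|$. Thus
\[
\frac{|Y|^{2m}}{4s^2}\Big/\bigl(X|Y|\bigr)^k
=\frac{|Y|^{k+2}}{4s^2}\Bigl(\frac{|Y|}{X}\Bigr)^k
\ge \frac{|Y|^{k+2}}{4s^2}.
\]
It suffices to show $|Y|^{k+2}>4s^2(c-s^2)$. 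The right side is at most $4b^2(c-a^2)=16m\,b^2$. For the left side, $b^2-D=2b$ gives $b-\sqrt D\ge 2b/(b+\sqrt D)\ge 1$ up to lower order. Hence $b-s\gtrsim1$ in this range, and $|Y|=(b-s)(s+a)\gtrsim 2a$.

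Since $k+2\ge4$, the bound $(2a)^{4}>16m\,b^2$ holds comfortably for $m\ge3$, with $a=2m-1\ge5$. The main obstacle is making this lower bound for $|Y|$ precise uniformly up to $s=D/\sqrt{D+2}$, and checking that the two regimes overlap so that every $s\in[a,b]$ is covered. If the constants turn out tight for $m=3$, I would verify that case by a direct computation of $Q$ on $[5,7]$.
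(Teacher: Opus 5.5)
Your sign analysis, the case $s^2\ge c$, and Regime~1 are correct. In fact AM--GM is strict at $s^2=D^2/(D+2)$, because $X+Y=2(s^2-D)\neq0$ there, so Regime~1 already covers its endpoint.

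\textbf{The gap: an exponent error in Regime~2.} Since $2m-k=m+1=k+2$,
\[
\frac{|Y|^{2m}}{4s^2\,(X|Y|)^k}=\frac{|Y|^{k+2}}{4s^2\,X^k}=\frac{|Y|^{2}}{4s^2}\Bigl(\frac{|Y|}{X}\Bigr)^{k}.
\]
You wrote $\frac{|Y|^{k+2}}{4s^2}(|Y|/X)^k$ instead, which is too large by a factor $|Y|^k$. So $|Y|^{k+2}>4s^2(c-s^2)$ is not a sufficient condition, and the estimate $(2a)^4>16mb^2$ proves nothing. With the correct exponent you need
\[
|Y|^2\,(|Y|/X)^k>4s^2(c-s^2).
\]
Discarding the factor via $(|Y|/X)^k\ge1$ fails just above $s=a$: there $|Y|^2\to16a^2$, while $4s^2(c-s^2)\to16ma^2$.

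\textbf{Keeping the factor does not rescue the top of Regime~2.} At $s^2=D^2/(D+2)$ one has $X=2(s-\eta)$ and $|Y|=2(s+\eta)$ with $\eta=D/(D+2)$. Putting $x=\eta/s=1/\sqrt{D+2}$,
\[
\frac{|Y|^2}{4s^2}\Bigl(\frac{|Y|}{X}\Bigr)^k=(1+x)^2\Bigl(\frac{1+x}{1-x}\Bigr)^k<e^{1.01},
\qquad\text{while}\qquad
c-s^2=\frac{16m^2}{4m^2+1}>3.8 .
\]
Concretely, for $m=3$ and $s=35/\sqrt{37}$ you get $|Y|^6/(4s^2)\approx4.37\times10^4$, which is less than $(X|Y|)^2(c-s^2)\approx6.46\times10^4$. (Here $Q$ is still positive, since $B^2\approx8.20\times10^4$.) By continuity, dropping $X^m$ fails on an interval just below $D/\sqrt{D+2}$, and AM--GM fails there as well. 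So your two regimes leave an uncovered window for every $m\ge3$, and neither sharper constants nor a separate check of $m=3$ closes it.

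\textbf{What is missing} is a lower bound on $X^m+|Y|^m$ that keeps its second-order term rather than either extreme. Set $u=(s^2-D)/(2s)$, which runs over $[-1,1]$ on $[a,b]$. Then $X=2s(1+u)$, $|Y|=2s(1-u)$, and $Q=2^as^{a-1}L$ with $L=E_m(u)+u(1-u^2)^kv$. The paper uses three bounds:
\begin{itemize}
\item $E_m(u)\ge1+m(2m-1)u^2$, from the binomial expansion;
\item $0\le(1-u^2)^k\le1$;
\item $0<v\le2m$.
\end{itemize}
These give $L\ge(1-m|u|)^2+m(m-1)u^2>0$ on all of $[a,b]$, with no case split.
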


\begin{proof}
On this interval, \eqref{eq:param} gives $v>0$, and $u$ increases
from $-1$ to $1$, since
\[
u'(s)=\frac12\left(1+\frac D{s^2}\right)>0.
\]
Consequently $v=\sqrt{u^2+D}\le2m$ and
$0\le(1-u^2)^k\le1$.  The polynomial $E_m$ is even with nonnegative coefficients,
so
\[
E_m(u)\ge1+\binom{2m}{2}u^2=1+m(2m-1)u^2.
\]
It follows that
\begin{align*}
L(s)&\ge1+m(2m-1)u^2-2m|u|\\
&=(1-m|u|)^2+m(m-1)u^2>0.
\end{align*}
The last inequality holds also at $u=0$, where the expression is $1$.
The denominator in \eqref{eq:LQ} is positive on $[a,b]$, so $Q(s)>0$.
\end{proof}

\section{Factor descent}\label{sec:descent}

\begin{lemma}\label{lem:coprime}
The Laurent polynomials $L(s)$ and $L(-D/s)$ are coprime in
$\Q[s,s^{-1}]$.
\end{lemma}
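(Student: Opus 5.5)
A common root of $L(s)$ and $L(-D/s)$ would give, through the parametrization \eqref{eq:param}, a point where the two factors of the difference of squares in \eqref{eq:normu} vanish together. The plan is to show that no such point exists.

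Suppose $s_0\in\overline{\Q}^\times$ satisfies $L(s_0)=L(-D/s_0)=0$. Let $u_0=u(s_0)$ and $v_0=v(s_0)$. The substitution $s\mapsto -D/s$ fixes $u$ and negates $v$, so
\[
E_m(u_0)+u_0(1-u_0^2)^k v_0=0,\qquad E_m(u_0)-u_0(1-u_0^2)^k v_0=0.
\]
Adding and subtracting gives $E_m(u_0)=0$ and $u_0(1-u_0^2)^kv_0=0$. We rule out each factor of the second product.
\begin{itemize}
\item If $u_0=0$, then $E_m(0)=1\neq0$, a contradiction.
\item If $u_0=\pm1$, then $E_m(\pm1)=2^{2m-1}\neq0$, a contradiction.
\item If $v_0=0$, then $s_0^2=-D$, so $u_0=s_0$ and $u_0^2=-D$. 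Then $(1\pm u_0)^{2m}$ are complex conjugates, because $u_0=\pm i\sqrt D$ is purely imaginary. Hence $E_m(u_0)=\operatorname{Re}(1+i\sqrt D)^{2m}$, and we must show this is nonzero.
\end{itemize}

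For the last case, note that $\operatorname{Re}(1+i\sqrt D)^{2m}=0$ would make $(1+i\sqrt D)^{4m}$ a negative real number. Equivalently, the argument $\theta=\arctan\sqrt D$ would satisfy $4m\theta\equiv\pi\pmod{2\pi}$, so $\theta$ would be a rational multiple of $\pi$. A cleaner route is an integrality check. The element $(1+i\sqrt D)/(1-i\sqrt D)$ would be a root of unity lying in $\Q(\sqrt{-D})$, with norm $1$ and denominator $1+D=4m^2$. Since $D$ is odd and greater than $3$, the only roots of unity in $\Q(\sqrt{-D})$ are $\pm1$. Also $(1+i\sqrt D)^2/(1+D)$ is not $\pm1$. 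Therefore no such root of unity arises. Alternatively, expand $E_m(u_0)=\sum_j\binom{2m}{2j}(-D)^j$ and reduce modulo a suitable prime; modulo $D$ the sum is $\equiv1$, so it is nonzero. This mod-$D$ reduction is the simplest argument and the one I would use.

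Since $L(s)\in\Q[s,s^{-1}]$, a common nonunit factor in $\Q[s,s^{-1}]$ would have a root $s_0\in\overline\Q^\times$, and every case above ends in a contradiction. This proves the two Laurent polynomials are coprime.

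The main obstacle is the case $v_0=0$, which requires showing $E_m(i\sqrt D)\neq0$. The congruence modulo $D$ disposes of it: every term with $j\ge1$ is divisible by $D$, while the $j=0$ term equals $1$.
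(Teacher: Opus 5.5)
Your argument is correct and is essentially the paper's proof: a common zero forces $E_m(u)=0$ and $u(1-u^2)^kv=0$; the cases $u=0,\pm1$ fail by evaluating $E_m$, and the case $v=0$ fails because $E_m(u)=\sum_j\binom{2m}{2j}(-D)^j\equiv1\pmod D$. You were right to rely on the mod-$D$ congruence rather than the root-of-unity detour, because the claim that $\Q(\sqrt{-D})$ has only the roots of unity $\pm1$ is false in this range: for example, $m=13$ gives $D=675=3\cdot15^2$, so $\Q(\sqrt{-D})=\Q(\sqrt{-3})$.
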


\begin{proof}
A common zero $s\in\mathbb C^\times$ would satisfy
\[
E_m(u)=0,\qquad u(1-u^2)^k v=0.
\]
The possibilities $u=0$ and $u=\pm1$ are excluded by
$E_m(0)=1$ and $E_m(\pm1)=2^a$.
If $v=0$, then $u^2=-D$, and
\[
E_m(u)=\sum_{j=0}^{m}\binom{2m}{2j}(-D)^j\equiv1\pmod D.
\]
This integer is nonzero because $D>1$.  Thus no common zero exists.
\end{proof}

\begin{lemma}[Factor descent]\label{lem:descent}
If $\Delta_a(t)$ admits a Fox--Milnor factorization, then there is
a monic polynomial $H\in\Z[s]$ of degree $a$ such that
\begin{equation}\label{eq:Hfactor}
Q(s)=-H(s)H(-s).
\end{equation}
\end{lemma}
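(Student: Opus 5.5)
Assume $\Delta_a(t)\doteq f(t)f(t^{-1})$ with $f\in\Z[t,t^{-1}]$. The plan is to push this factorization through the two substitutions $t=(1+u)/(1-u)$ and $u=u(s)$ from \eqref{eq:param}, and use Lemma~\ref{lem:coprime} to show that $f$ must correspond to $L$ itself. Normalize $f$ to a genuine polynomial with $f(0)\neq0$. Since $\Delta_a$ is monic of degree $2a$ with $\Delta_a(0)=1$, we get $\deg f=a$ and, after a sign change, $f$ is monic with $f(0)=\pm1$. Set $F(u)=(1-u)^a f\bigl((1+u)/(1-u)\bigr)\in\Z[u]$. Then
\[
\mathcal D_m(u)=\pm F(u)F(-u).
\]
Now substitute $u=u(s)$ and multiply through by a suitable power of $s$. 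Since $u=(s^2-D)/(2s)$, the quantity $G(s)=(2s)^aF(u(s))$ is a polynomial in $\Z[s]$ of degree at most $2a$. On the other side, \eqref{eq:Lnorm} and \eqref{eq:LQ} give
\[
\mathcal D_m(u(s))=L(s)L(-D/s)=\frac{Q(s)\,Q(-D/s)}{2^{2a}\,s^{a-1}\,(-D/s)^{a-1}}.
\]

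The next step is to compare irreducible factors in $\Q[s,s^{-1}]$. By Lemma~\ref{lem:coprime}, $L(s)$ and $L(-D/s)$ share no factor. The involution $\sigma\colon s\mapsto -D/s$ fixes $u$, so it fixes $F(u(s))$ and $F(-u(s))$. Meanwhile $s\mapsto -s$ sends $u\mapsto -u$, so it interchanges these two. Let $p$ be an irreducible factor of $L(s)$. Then $p$ divides $F(u)F(-u)$. If $p$ divides $F(u)$, then $\sigma(p)$, which divides $L(-D/s)$, also divides $F(u)$. By coprimality, $F(u(s))$ is then divisible by products drawn from both $L$ and $\sigma L$ in a balanced way.

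To make this precise, I would count degrees through the $2a$ roots. The zeros of $Q$ are the zeros of $L$ in $\mathbb C^\times$. Because $Q$ is even, these roots are closed under $s\mapsto -s$; note $Q(0)\neq0$. Each root $s_0$ of $Q$ gives $u_0=u(s_0)$, a root of $\mathcal D_m$, and each root of $\mathcal D_m$ lifts to exactly one of $s_0$ and $\sigma(s_0)$ among roots of $Q$, by coprimality. So the multiset of roots of $Q$ maps bijectively onto the roots of $\mathcal D_m$. The roots of $F$ form a Galois-stable half of them, chosen so that $\{u_0\}\cup\{-u_0\}$ covers everything. Pulling back gives a Galois-stable set $S$ of $a$ roots of $Q$ such that $S\sqcup(-S)$ is all the roots of $Q$. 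Then
\[
H(s)=\prod_{s_0\in S}(s-s_0)\in\Q[s]
\]
is monic of degree $a$, and $Q(s)=\pm H(s)H(-s)$. Since $a$ is odd, $H(s)H(-s)$ has leading coefficient $-1$, while $Q$ is monic; hence the sign is $-$, giving \eqref{eq:Hfactor}. Finally, $Q$ is monic in $\Z[s]$ and $H$ is a monic rational factor, so Gauss's lemma gives $H\in\Z[s]$.

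The main obstacle is the bookkeeping in the middle step: checking that the factorization of $\mathcal D_m$ really lifts to a factorization of $Q$ itself, rather than to some mixture of roots of $Q(s)$ and $Q(-D/s)$. The first thing I would do is verify that the map from roots of $Q$ to roots of $\mathcal D_m$ respects multiplicities, using $\deg_u\mathcal D_m=2a$ together with $\deg Q=2a$.
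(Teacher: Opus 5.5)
Your argument is correct and is essentially the paper's: both lift $\mathcal D_m(u)=\pm F(u)F(-u)$ to the $s$-variable using the coprimality of $L(s)$ and $L(-D/s)$ and the evenness of $Q$, then fix the sign by leading coefficients and get integrality from Gauss's lemma. The paper tracks multiplicities of irreducible factors in $\Q[s]$ with $H=\gcd(Q,s^aJ(s))$, whereas you use root multisets and Galois stability. One correction: the multiplicity matching you flag does not follow from the degree count alone. It follows because $s\mapsto u(s)$ is unramified away from $s^2=-D$, and such points cannot be zeros of $L$, since $L(s_0)=L(-D/s_0)$ there.
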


\begin{proof}
Since $\Delta_a$ is monic of degree $2a$ with
$\Delta_a(0)=\Delta_a(1)=1$, a Fox--Milnor factorization can be
normalized as
\[
\Delta_a(t)=f(t)t^af(t^{-1}),
\]
where $f\in\Z[t]$ is monic of degree $a$.
Indeed, shifting the Laurent factor to have nonzero constant term
and comparing the extreme coefficients fixes its degree and makes
its leading and constant coefficients units.  If the remaining unit
is $\varepsilon t^a$, then $1=\varepsilon f(1)^2$ gives
$\varepsilon=1$.  Replacing $f$ by $-f$ if necessary makes it monic.

Set
\[
G(u)=(1-u)^af\!\left(\frac{1+u}{1-u}\right),
\qquad J(s)=G(u(s)).
\]
Then $G\in\Z[u]$ has degree at most $a$, so $s^aJ(s)\in\Q[s]$.
Equations~\eqref{eq:Lnorm} and the Fox--Milnor factorization give
\begin{equation}\label{eq:twofactorizations}
L(s)L(-D/s)=J(s)J(-s).
\end{equation}
Let $H$ be the monic greatest common divisor of $Q(s)$ and
$s^aJ(s)$ in $\Q[s]$.

We check multiplicities in \eqref{eq:twofactorizations}.
By \eqref{eq:LQ}, $Q$ and $L$ differ by a unit in
$\Q[s,s^{-1}]$.  Lemma~\ref{lem:coprime} therefore implies that
an irreducible factor of $Q$ has the same multiplicity in $Q$ as
in $J(s)J(-s)$.  Its multiplicity in $J(s)$ is thus at most its
multiplicity in $Q$, and is exactly its multiplicity in $H$.
Here the factor $s^a$ does not affect the greatest common divisor,
since $Q(0)\ne0$.
Because $Q$ is even, the same argument shows that its multiplicity
in $J(-s)$ is its multiplicity in $H(-s)$.
Consequently $Q$ is a constant multiple of $H(s)H(-s)$.
Comparing degrees gives $\deg H=a$, and comparing leading
coefficients gives \eqref{eq:Hfactor}, since $a$ is odd.
Finally, $H$ is a monic divisor of the monic integer polynomial $Q$,
so Gauss's lemma gives $H\in\Z[s]$.
\end{proof}

\section{Endpoint values and the contradiction}\label{sec:endpoints}

For a nonzero integer polynomial $F$, let $\operatorname{cont}(F)$
be the positive greatest common divisor of its coefficients.
Gauss's lemma states that
\[
\operatorname{cont}(FG)=\operatorname{cont}(F)\operatorname{cont}(G).
\]
An integral change of variable $y\mapsto-y-n$, with $n\in\Z$,
preserves content, since it and its inverse preserve coefficient
divisibility.  We will use the following consequence.

\Needspace{7\baselineskip}
\begin{lemma}\label{lem:content}
Let $F\in\Z[y]$ be nonzero, $n\in\Z$, and $N$ a positive integer.
If every coefficient of $F(y)F(-y-n)$ is divisible by $N^2$,
then every coefficient of $F(y)$ is divisible by $N$.
\end{lemma}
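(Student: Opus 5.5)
The plan is to use Gauss's lemma together with the invariance of content under the integral change of variable $y\mapsto -y-n$, both recorded just before the statement. Set $G(y)=F(-y-n)$. This is again a nonzero integer polynomial. Since $y\mapsto -y-n$ preserves content, we have
\[
\operatorname{cont}(G)=\operatorname{cont}(F).
\]
To justify this, note that $y\mapsto -y-n$ is its own inverse on $\Z[y]$: applying it twice gives $y\mapsto -(-y-n)-n=y$. An integer $d$ divides every coefficient of $F$ exactly when $F\in d\,\Z[y]$. The substitution maps $d\,\Z[y]$ into itself, because it is a ring endomorphism of $\Z[y]$ and is $\Z$-linear. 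Applying it once and then again shows that $d$ divides the content of $F$ if and only if it divides the content of $G$.

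Next apply Gauss's lemma to the product $F(y)F(-y-n)=F(y)G(y)$:
\[
\operatorname{cont}\bigl(F(y)F(-y-n)\bigr)=\operatorname{cont}(F)\operatorname{cont}(G)=\operatorname{cont}(F)^2.
\]
By hypothesis $N^2$ divides every coefficient of the product, so $N^2$ divides its content. Hence $N^2\mid \operatorname{cont}(F)^2$.

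It remains to pass from $N^2\mid c^2$ to $N\mid c$ for positive integers $c=\operatorname{cont}(F)$ and $N$. Compare $p$-adic valuations: for every prime $p$, $2v_p(N)\le 2v_p(c)$, so $v_p(N)\le v_p(c)$, which gives $N\mid c$. Since $c$ divides every coefficient of $F$, so does $N$.

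There is no real obstacle here. The only points needing care are that $F(-y-n)$ is nonzero, which is needed for Gauss's lemma and holds because the substitution is invertible, and that the content-invariance argument covers the negation as well as the shift. Both are immediate from the involutive form of the substitution.
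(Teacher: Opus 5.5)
Your proof is correct and follows the paper's argument: content invariance under $y\mapsto -y-n$ and Gauss's lemma give $\operatorname{cont}\bigl(F(y)F(-y-n)\bigr)=\operatorname{cont}(F)^2$, and then $N^2\mid\operatorname{cont}(F)^2$ forces $N\mid\operatorname{cont}(F)$. The paper states these steps in one line each, and your involution argument for content invariance and your $p$-adic valuation step just spell them out.
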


\begin{proof}
The content of the product is $\operatorname{cont}(F)^2$.
Thus $N^2\mid\operatorname{cont}(F)^2$, which implies
$N\mid\operatorname{cont}(F)$.
\end{proof}

Assume that a Fox--Milnor factorization exists, and choose $H$ as in
Lemma~\ref{lem:descent}.

\begin{lemma}\label{lem:endpointval}
For $d=a,b$, we have
\begin{equation}\label{eq:Hvals}
|H(d)|=|H(-d)|=2^ad^k,
\qquad H(-d)=-H(d).
\end{equation}
\end{lemma}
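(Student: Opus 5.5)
The sign statement follows from the factorization in Lemma~\ref{lem:descent}, and the absolute values will come from Gauss's lemma in the form of Lemma~\ref{lem:content}. First the signs. Evaluating \eqref{eq:Hfactor} at $s=d$ and using \eqref{eq:endQ} gives
\[
H(d)H(-d)=-Q(d)=-2^{2a}d^{2k}<0 .
\]
So $H(d)$ and $H(-d)$ are nonzero integers of opposite sign whose product has absolute value $N^2$, where $N=2^ad^k$. Hence $H(-d)=-H(d)$ will follow as soon as $|H(d)|=|H(-d)|$. That equality in turn reduces to showing $N\mid H(d)$ and $N\mid H(-d)$: two integers of absolute value at least $N$ whose product has absolute value $N^2$ must both have absolute value exactly $N$.

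For the divisibility I plan to use Lemma~\ref{lem:content} with an integral polynomial $F$ built from $H$. The requirements on $F$ are:
\begin{itemize}
\item $F(y)F(-y-n)$ is, up to sign, an integral reparametrization of $Q$ whose coefficients are all divisible by $N^2$;
\item $H(d)$ and $H(-d)$ appear as values of $F$ at integers (or as coefficients of $F$).
\end{itemize}
Then $N\mid\operatorname{cont}(F)$ gives the divisibility. The naive choice $F(y)=H(y+d)$, $n=2d$, does give
\[
F(y)F(-y-2d)=-Q(y+d),
\]
but it fails: $Q(y+d)$ is monic, so its coefficients cannot all be divisible by $N^2$.

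I will therefore first try to exploit the structure of $Q$ near its endpoints. At $s=d$ one of $X,Y$ vanishes, $B(d)=2^ad^k$ by \eqref{eq:B0}, and the term $(XY)^k(s^2-c)$ vanishes there to order $k$. Mod powers of $d$, the shifted variables give $X\equiv s(s+2)$ and $Y\equiv s(s-2)$. I expect the right $F$ to be a normalized or reversed form of $H$, for instance $s^aH(-D/s)$ suitably rescaled, or $H$ written in the variable in which $B$ is expanded. The aim is to rescale so that the monic top coefficient is no longer an obstruction and the endpoint values become extreme coefficients. One would then verify coefficientwise divisibility of the transformed $Q$ separately modulo $2^{2a}$ and modulo $d^{2k}$, using the explicit formula \eqref{eq:Qcanonical}.

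The main obstacle is exactly this step: choosing the substitution so that the transformed $Q$ is divisible by $N^2$ coefficientwise while the transformed $H$ remains integral. The sign argument and the final deduction from the product $-N^2$ are routine.
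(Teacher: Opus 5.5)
Your reduction is sound. Since $H(d)H(-d)=-Q(d)=-N^2$ with $N=2^ad^k$, it suffices to prove $N\mid H(d)$ and $N\mid H(-d)$, and Lemma~\ref{lem:content} is the right tool for that. But this divisibility is the entire content of the lemma, and you leave it open. You never produce an integral reparametrization $\phi$ with $Q(\phi(y))\in N^2\Z[y]$ and $-Q(\phi(y))=F(y)F(-y-n)$ for $F=H\circ\phi$. The candidates you suggest are not shown to work. For example, the reversal $s^aH(-D/s)$ takes the value $a^aH(-b)$ at $s=a$, so it mixes the two endpoints instead of isolating $H(\pm d)$. As written, the proposal has a genuine gap at the step you yourself flag as the main obstacle.

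The missing idea is to rescale the variable instead of only shifting it. Rescaling removes the monicity obstruction, because the leading coefficient of $Q(\phi(y))$ becomes a power of the scale factor. Also, the endpoint values need not be extreme coefficients: once $N\mid\operatorname{cont}(F)$, every integer value of $F$ is divisible by $N$, so you only need $\pm d\in\phi(\Z)$. The paper treats the two primes separately.

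For the odd part, take $s=dy$. Because $d\mid D$, we have $X(dy)=d(dy^2+2y-D/d)$ and $Y(dy)=d(dy^2-2y-D/d)$. Then \eqref{eq:B0} gives $B(dy)\in d^k\Z[y]$, and hence $Q(dy)\in d^{2k}\Z[y]$. Apply Lemma~\ref{lem:content} to $F(y)=H(dy)$ with $n=0$, and evaluate at $y=\pm1$. Your observation that $X\equiv s(s+2)$ and $Y\equiv s(s-2)\pmod d$ was pointing exactly here.

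For the $2$-part, take $s=a+2y$. Then $X=4y(a+1+y)$, $Y=4(y-1)(a+y)$ and $s^2-c=4(y(a+y)-m)$. So $B(a+2y)\in 2\cdot4^k\Z[y]=2^a\Z[y]$ and $Q(a+2y)\in2^{2a}\Z[y]$. Apply the lemma to $F(y)=H(a+2y)$ with $n=a$, and evaluate at $y=0,1,-a,-a-1$ to reach $H(a),H(b),H(-a),H(-b)$.

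Since $d$ is odd, the two divisibilities combine to give $N\mid H(\pm d)$, and your closing argument finishes the proof. You can also handle both primes at once with $s=d(1+2y)$ and $n=1$. Then $X$ and $Y$ both lie in $4d\,\Z[y]$ and $s^2-c\in4\Z[y]$, so $Q(d(1+2y))\in N^2\Z[y]$, with $F(0)=H(d)$ and $F(-1)=H(-d)$.
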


\begin{proof}
First take $d\in\{a,b\}$ and substitute $s=dy$.  Since $d\mid D$,
\[
X(dy)=d\left(dy^2+2y-\frac Dd\right),\qquad
Y(dy)=d\left(dy^2-2y-\frac Dd\right).
\]
Equation~\eqref{eq:B0} shows that $d^k$ divides every coefficient of
$B(dy)$, and $(X(dy)Y(dy))^k$ is divisible coefficientwise by
$d^{2k}$.  Thus every coefficient of $Q(dy)$ is divisible by
$d^{2k}$.  Applying Lemma~\ref{lem:content} to
\[
Q(dy)=-H(dy)H(-dy)
\]
gives $d^k\mid H(d)$ and $d^k\mid H(-d)$.

For the power of $2$, substitute $s=a+2y$.  Then
\[
X=4y(a+1+y),\qquad Y=4(y-1)(a+y),
\qquad s^2-c=4\bigl(y(a+y)-m\bigr).
\]
Every coefficient of $B(a+2y)$ is divisible by
$2\cdot4^k=2^a$, while every coefficient of
$(XY)^k(s^2-c)$ is divisible by $16^k\cdot4=2^{2a}$.
Hence $Q(a+2y)$ is coefficientwise divisible by $2^{2a}$.
Writing $F(y)=H(a+2y)$, we have
\[
Q(a+2y)=-F(y)F(-y-a).
\]
Lemma~\ref{lem:content} gives $F\in2^a\Z[y]$.
Evaluating at $y=0,1,-a,-a-1$ shows that $2^a$ divides
$H(a),H(b),H(-a),H(-b)$.

Since $d$ is odd, we have proved $2^ad^k\mid H(\pm d)$.
On the other hand, \eqref{eq:endQ} and \eqref{eq:Hfactor} give
\[
H(d)H(-d)=-(2^ad^k)^2.
\]
Dividing both factors by $2^ad^k$ leaves two integers with product
$-1$.  This proves \eqref{eq:Hvals}.
\end{proof}

\begin{proof}[Proof of Theorem~\ref{thm:core}]
Suppose that $\Delta_a$ admits a Fox--Milnor factorization, and
write
\[
H(a)=\varepsilon\,2^aa^k,\qquad
H(b)=\delta\,2^ab^k,
\qquad \varepsilon,\delta\in\{\pm1\}.
\]
Since $a\equiv-1$ and $b\equiv1\pmod m$, we have
$H(b)\equiv H(-a)=-H(a)\pmod m$.
As $k$ is even, reduction of the displayed endpoint values gives
\[
\delta\,2^a\equiv-\varepsilon\,2^a\pmod m.
\]
The integer $m\ge3$ is odd, so $2^a$ is invertible modulo $m$.
Thus $m$ divides $\delta+\varepsilon\in\{-2,0,2\}$, so
$\delta=-\varepsilon$.  Hence $H(a)H(b)<0$, and the intermediate value theorem gives a zero
$s_0\in(a,b)$ of $H$.  Equation~\eqref{eq:Hfactor} then gives
$Q(s_0)=0$, contrary to Lemma~\ref{lem:positiveinterval}.
\end{proof}

\begin{proof}[Proof of Theorem~\ref{thm:main}]
The case $r=0$ is Theorem~\ref{thm:core}.  Proposition~\ref{prop:pairedreduction}
transfers the failure of the Fox--Milnor condition from $P_a$ to every
knot in $\mathcal P(a;q_1,\ldots,q_r)$.  Failure of the Fox--Milnor
condition rules out algebraic sliceness.  Since every topologically
slice knot is algebraically slice, it also rules out topological
sliceness.
\end{proof}

\subsection*{Consequences}  

\emph{Corollary~\ref{cor:allpaired}} follows by combining
Theorem~\ref{thm:main} with \cite[Theorem~4.2]{KLS2022}, which treats
$a\equiv3\pmod4$, and then applying Proposition~\ref{prop:pairedreduction}.

\emph{Corollary~\ref{cor:exceptional}} follows because Lecuona's exceptional
set is contained in the family of Corollary~\ref{cor:allpaired}.
Thus none of its members is algebraically slice.  This proves
\cite[Conjecture~1.3]{Lecuona2015} and makes the nonexceptional
hypothesis in \cite[Theorem~1.1]{Lecuona2015} unnecessary.  The residual
exceptional family of \cite{LecuonaWand2026} contains no ribbon knots.
The same conclusions hold for mirrors by Remark~\ref{rem:mirrors}.

For \emph{Corollary~\ref{cor:3strand}}, Kim--Lee--Song
\cite[Corollaries~1.2 and~1.3]{KLS2022}, together with
\cite[Corollary~1.4]{Lecuona2015} and
\cite[Theorems~1.2 and~1.6]{Miller2017}, establish the stated
classifications outside the residual families $P_a$ and their mirrors,
where $a\equiv1,97\pmod{120}$.  Both congruence classes lie in
$a\equiv1\pmod4$ and are ruled out by Theorem~\ref{thm:main} with
$r=0$ and Remark~\ref{rem:mirrors}.  The converse in the topological
statement uses the fact that ribbon knots are smoothly slice and
Freedman's theorem that knots with trivial Alexander polynomial are
topologically slice \cite{Freedman1982}.

Finally, \emph{Corollary~\ref{cor:LW}} follows from
Corollary~\ref{cor:exceptional}: no ordering of an exceptional
parameter set defines a ribbon knot.  Hence the classification of
prime fibered ribbon pretzel knots up to reordering in
\cite[Theorem~1.1]{LecuonaWand2026} holds without the
nonexceptional hypothesis.

\section*{Acknowledgements}
The author is grateful to Professor Peter Teichner for his notes on \emph{Slice knots},
which provided an introduction to the subject and were helpful
when returning to the background for this project.

\bibliographystyle{amsalpha}
\bibliography{pretzel_references}

@article{FoxMilnor1966,
  author  = {Fox, Ralph H. and Milnor, John W.},
  title   = {Singularities of $2$-spheres in $4$-space and cobordism of knots},
  journal = {Osaka Journal of Mathematics},
  volume  = {3},
  year    = {1966},
  pages   = {257--267}
}

@article{Freedman1982,
  author  = {Freedman, Michael H.},
  title   = {The topology of four-dimensional manifolds},
  journal = {Journal of Differential Geometry},
  volume  = {17},
  number  = {3},
  year    = {1982},
  pages   = {357--453}
}

@article{GreeneJabuka2011,
  author  = {Greene, Joshua Evan and Jabuka, Stanislav},
  title   = {The slice-ribbon conjecture for $3$-stranded pretzel knots},
  journal = {American Journal of Mathematics},
  volume  = {133},
  number  = {3},
  year    = {2011},
  pages   = {555--580}
}

@article{KLS2022,
  author  = {Kim, Min Hoon and Lee, Changhee and Song, Minju},
  title   = {Non-slice $3$-stranded pretzel knots},
  journal = {Journal of Knot Theory and Its Ramifications},
  volume  = {31},
  number  = {3},
  year    = {2022},
  pages   = {2250018}
}

@article{Lecuona2015,
  author  = {Lecuona, Ana G.},
  title   = {On the slice-ribbon conjecture for pretzel knots},
  journal = {Algebraic \& Geometric Topology},
  volume  = {15},
  number  = {4},
  year    = {2015},
  pages   = {2133--2173}
}

@article{LecuonaWand2026,
  author  = {Lecuona, Ana G. and Wand, Andy},
  title   = {Fibered ribbon pretzels},
  journal = {Bulletin of the London Mathematical Society},
  volume  = {58},
  number  = {1},
  year    = {2026},
  pages   = {e70271}
}

@article{Levine1969,
  author  = {Levine, Jerome},
  title   = {Invariants of knot cobordism},
  journal = {Inventiones Mathematicae},
  volume  = {8},
  year    = {1969},
  pages   = {98--110}
}

@article{Miller2017,
  author  = {Miller, Allison N.},
  title   = {The topological sliceness of $3$-strand pretzel knots},
  journal = {Algebraic \& Geometric Topology},
  volume  = {17},
  number  = {5},
  year    = {2017},
  pages   = {3057--3079}
}

\end{document}